\documentclass[a4paper, 12pt]{amsart}


\usepackage[margin=2.5cm]{geometry}   

\usepackage[mnsy]{MnSymbol}
\usepackage{verbatim} 
\usepackage{graphicx}
\usepackage{psfrag}
\usepackage{pinlabel}

\newcommand{\R}{\mathbb R}

\newcommand{\Q}{\mathbb Q}
\newcommand{\Z}{\mathbb Z}

\newcommand{\Diff}{\rm Diff}

\newcommand{\al}{\alpha}

\newcommand{\de}{\delta}

\newcommand{\ga}{\gamma}

\newcommand{\Si}{\Sigma}

\renewcommand{\th}{\theta}

\newcommand{\x}{\times}

\newcommand{\del}{\partial}
\newcommand{\co}{\thinspace\colon}
\DeclareMathOperator{\End}{End}

\newtheorem{thm}{Theorem}[section]
\newtheorem{lemma}[thm]{Lemma}
\newtheorem{cor}[thm]{Corollary}

\newtheorem{prop}[thm]{Proposition}

\theoremstyle{definition}
\newtheorem{defn}[thm]{Definition}

\begin{document}
\author{Paolo Ghiggini and Paolo Lisca}
\title[Open book decompositions versus prime factorizations]
{Open book decompositions versus prime factorizations of closed, oriented 3--manifolds}
\subjclass[2010]{57N10, 57M25}
\keywords{Open book decomposition, prime factorization, 3-manifold}

\begin{abstract}
Let $M$ be a closed, oriented, connected 3--manifold and $(B,\pi)$ an open book decomposition on $M$ with 
page $\Si$ and monodromy $\varphi$. It is easy to see that the first Betti number of $\Si$ is bounded below 
by the number of $S^2\times S^1$--factors in the prime factorization of $M$. Our main result is that equality is realized if 
and only if $\varphi$ is trivial and $M$ is a connected sum of $S^2\times S^1$'s. We also give some applications 
of our main result, such as a new proof of the result by Birman and Menasco that if the closure of a braid 
with $n$ strands is the unlink with $n$ components then the braid is trivial. 
\end{abstract}

\maketitle

\section{Introduction}\label{s:intro}
An {\em abstract open book} is a pair $(\Si, \varphi)$, where $\Si$ is a connected, oriented surface with $\del\Si\neq\emptyset$ 
and the {\em monodromy} $\varphi$ is an element of the group $\Diff^+(\Si,\del\Si)$ of orientation--preserving diffeomorphisms of 
$\Si$ which restrict to the identity on a neighborhood of the boundary. We say that the monodromy $\varphi$ is 
{\em trivial} if it is isotopic to the identity of $\Si$ via diffeomorphisms which fix $\del\Si$ pointwise. 
Let $N_{\varphi}$ denote the mapping torus
\[
N_\varphi = \Si\x [0,1]/(p,1) \sim (\varphi(p),0).
\]
To the open book $(\Si,\varphi)$ one can associate a closed, oriented, connected 3--manifold $M_{(\Si,\varphi)}$ 
by using the natural identification of $\del N_\varphi = \del\Si\times S^1$ with the boundary of $\del\Si\times D^2$:  
\[
M_{(\Si,\varphi)} := N_\varphi\cup_{\del} \del\Si\x D^2.
\] 
The link $B:=\del \Si\x \{0\}\subset M_{(\Si,\varphi)}$ is fibered, with fibration $\pi\co M_{(\Si,\varphi)}\setminus B\to S^1$ 
given by the obvious extension of the natural projection 
\[
N_\varphi = \Si\x [0,1]/(p,1) \sim (\varphi(p),0)\to S^1=[0,1]/1\sim 0
\]
and monodromy equal to $\varphi$. In other words, the pair $(B,\pi)$ is an 
{\em open book decomposition} of $M=M_{(\Si,h)}$ with {\em binding} $B$, 
pages $\Si_\th:= \overline{\pi^{-1}(\th)}$, $\th\in S^1$ and monodromy $\varphi$. 
We will always identify $N_{\varphi}$ with the complement of a tubular neighborhood of $B$
in $M$. 

If $(B,\pi)$ is an open book decomposition of $M$ with page $\Si$, it is easy to see that 
$M$ has a Heegaard splitting of genus $b_1(\Si)$. Since $M$ is obtained from each  
handlebody of the splitting by attaching 2--disks and 3--balls, this immediately implies the inequality 
\begin{equation}\label{e:b1ineq}
b_1(M)\leq b_1(\Si).
\end{equation}
We will provide a refinement of Inequality~\eqref{e:b1ineq} with Proposition~\ref{p:intersection}.

The following theorem is our main result. Its proof is based on well--known 
results due to Reidemeister~\cite{Re33}, Singer~\cite{Si33} and Haken~\cite{Ha68} (see Section~\ref{s:proof}). 
Recall that each closed, oriented, connected 
3--manifold $M$ has a prime factorization, unique up to order of the factors, of the form 
\begin{equation}\label{e:factor}
M = M_1\#\cdots\# M_h\# S^2\times S^1\#\stackrel{(k)}{\cdots}\# S^2\times S^1,
\end{equation}
where each $M_i$ is irreducible (see~e.g.~\cite{He76}).

\begin{thm}\label{t:main}
Let $(B,\pi)$ be an open book decomposition 
of a closed, oriented, connected 3--manifold $M$ with page $\Si$ and monodromy $\varphi$. Then, $b_1(\Si)$ 
is equal to the number of $S^2\times S^1$--factors in the prime factorization of $M$ if and 
only if $\varphi$ is trivial and $M$ is a connected sum of $S^2\times S^1$'s.
\end{thm}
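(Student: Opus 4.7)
The plan is to handle the two implications separately. For the \emph{if} direction, with $\varphi$ isotopic to the identity one has $N_\varphi \cong \Si \x S^1$, and a Mayer--Vietoris computation applied to the decomposition $M = (\Si \x S^1) \cup_{\del} (\del \Si \x D^2)$, using that the meridian of each solid torus in the binding neighborhood is identified with the $S^1$-factor of $N_\varphi$, shows that $b_1(M) = b_1(\Si)$. The assumption $M \cong \#_k\, S^2 \x S^1$ then forces $b_1(\Si) = k$.

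For the \emph{only if} direction, assume that $b_1(\Si)$ equals the number $k$ of $S^2 \x S^1$-summands in \eqref{e:factor}. Combining \eqref{e:b1ineq} with $b_1(M) = k + \sum_i b_1(M_i) \geq k$ forces equality throughout, so each irreducible summand $M_i$ is a rational homology sphere. The open book also induces a Heegaard splitting $\mathcal{H}$ of $M$ of genus $b_1(\Si) = k$. When $k = 0$, we have $\Si \cong D^2$, so $\varphi$ is trivial by Alexander's trick and $M = S^3$; assume then $k \geq 1$. In this case $M$ is reducible, and by Haken's theorem \cite{Ha68} combined with Reidemeister--Singer \cite{Re33, Si33}, the splitting $\mathcal H$ decomposes as a connected sum of Heegaard splittings of the prime factors of $M$ whose genera sum to $k$. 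Since each irreducible factor $M_i \neq S^3$ has Heegaard genus at least $1$ and each $S^2 \x S^1$-summand also requires genus at least $1$, we must have $h = 0$, so $M \cong \#_k(S^2 \x S^1)$, and each constituent summand-splitting is a genus-$1$ minimal splitting of $S^2 \x S^1$.

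The remaining, and most delicate, step is to lift the standardness from the level of the Heegaard splitting to the level of the open book itself, showing that $\Si$ is planar with $k+1$ boundary components and that $\varphi$ is isotopic to the identity. I would proceed by induction on $k$. For the inductive step, I would pick a non-separating $2$-sphere $S \subset M$ realizing a connected-sum decomposition and isotope it into \emph{good position} with respect to $(B,\pi)$: disjoint from the binding $B$ and meeting a generic page in a controlled collection of essential circles. Cutting $M$ along such an $S$ should correspond to a ``de-plumbing'' operation on the open book, i.e., removing a properly embedded arc from $\Si$ and correspondingly splitting the monodromy $\varphi$, producing an open book on $\#_{k-1}(S^2 \x S^1)$ with page of first Betti number $k-1$. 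The inductive hypothesis would then force this simpler open book to be standard, and reversing the surgery identifies $(\Si, \varphi)$ as the plumbing of the reduced open book with the standard open book of $S^2 \x S^1$ (page an annulus, trivial monodromy) --- again planar with trivial monodromy. The principal obstacle is establishing the good-position result for $S$ with respect to $(B,\pi)$ and verifying that the resulting operation on $(\Si,\varphi)$ is exactly the inverse of a standard plumbing, so that the inductive scheme closes cleanly.
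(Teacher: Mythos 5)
Your ``if'' direction and your reduction of the ``only if'' direction to $M\cong\#^k S^2\times S^1$ are sound, and the latter is a mild repackaging of what the paper does: you invoke additivity of Heegaard genus under connected sum (Haken plus Reidemeister--Singer) to kill the irreducible summands, whereas the paper normalizes a complete system of spheres with respect to the splitting (Lemma~\ref{standard spheres}) and surgers. Your base case $k=0$ is also fine, since $b_1(\Si)=0$ forces $\Si\cong D^2$.

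The genuine gap is in the final and essential step, proving that $\varphi$ is trivial, and your proposed strategy starts from an impossible configuration. You want to isotope a non--separating sphere $S$ so that it is \emph{disjoint from the binding} and meets pages in essential circles; but Proposition~\ref{p:spheres-B} shows that every embedded sphere in $M\setminus B$ bounds a ball in $M\setminus B$, so a non--separating sphere can never be made disjoint from $B$ (indeed $[B]=0$ in $H_1(M)$ forces $|S\cap B|\ge 2$ after a small isotopy). The correct normal form would be a sphere meeting $B$ in two points and each page in a single properly embedded arc $a$ with $\varphi(a)$ isotopic to $a$ --- but producing such a sphere is essentially equivalent to what you are trying to prove, and neither this good--position result nor the claimed ``de-plumbing'' correspondence is established; you acknowledge this as the principal obstacle, which means the proof is not complete. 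The paper circumvents the issue entirely by staying at the level of the Heegaard splitting $M=H_{b_1(\Si)}\cup H'_{b_1(\Si)}$: the compressing disks $D_i=S_i\cap H_{b_1(\Si)}$ form a minimal disk system, the product disks $D_i'=a_i\times I$ form another, Theorem~\ref{t:RS} makes them slide equivalent, and the key observation is that the property ``$\del D$ bounds a compressing disk in $H'_{b_1(\Si)}$'' is preserved under disk slides. Applied to the $D_i'$ this yields that $a_i\times\{0\}\cup\varphi(a_i)\times\{1\}$ bounds a disk in $\Si\times[0,1]$, hence $\varphi(a_i)\simeq a_i$ rel endpoints, and the Alexander method finishes. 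If you want to salvage your inductive de-plumbing scheme you would need to supply, at minimum, the corrected good--position statement for non--separating spheres relative to an open book, which is a substantial result in its own right.
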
 

Theorem~\ref{t:main} immediately implies the following corollary, which is also 
proved in~\cite[Proof of Theorem~1.3]{Ni10} and~\cite[Theorem~2]{GW13} using the fact 
that finitely generated free groups are not isomorphic to any of their nontrivial quotients.  

\begin{cor}\label{c:main} 
Any open book decomposition of $\#^k S^2\x S^1$ whose page $\Si$ satisfies $b_1(\Si)=k$ 
must have trivial monodromy.
\end{cor}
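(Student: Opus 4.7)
The plan is to deduce this corollary directly from Theorem~\ref{t:main}, which is already stated in the excerpt and may be freely invoked. The setup is to examine the prime factorization~\eqref{e:factor} of the specific manifold $M=\#^k S^2\x S^1$: it has no irreducible factors and exactly $k$ factors of $S^2\x S^1$. Consequently, the number of $S^2\x S^1$--factors in the prime factorization of $M$ is precisely $k$, which by hypothesis equals $b_1(\Si)$. Thus the equality case of Theorem~\ref{t:main} is in force, and the ``only if'' direction of that theorem immediately yields that the monodromy $\varphi$ is trivial (the other consequence, that $M$ is a connected sum of $S^2\x S^1$'s, is already built into the statement).

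With this strategy, there is essentially no obstacle: the content of the corollary is entirely subsumed in Theorem~\ref{t:main}, and once the prime factorization of $\#^k S^2\x S^1$ is unwound, the deduction is a single application of the main theorem. The genuine difficulty lies in Theorem~\ref{t:main} itself, whose proof (postponed to Section~\ref{s:proof}) rests on the classical results of Reidemeister, Singer and Haken; the role of the present corollary is only to record this consequence. I would write the proof of Corollary~\ref{c:main} in no more than two or three lines, pointing out that the hypothesis $b_1(\Si)=k$ saturates the inequality implicit in Theorem~\ref{t:main} for $M=\#^k S^2\x S^1$.
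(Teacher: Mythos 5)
Your proposal is correct and matches the paper exactly: the paper offers no separate argument for Corollary~\ref{c:main}, stating only that it is an immediate consequence of Theorem~\ref{t:main}, and your two-line deduction (the prime factorization of $\#^k S^2\x S^1$ has exactly $k$ factors of $S^2\x S^1$, so the hypothesis $b_1(\Si)=k$ places you in the equality case of the theorem) is precisely that intended deduction. Nothing is missing.
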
 

Corollary~\ref{c:main} implies Corollary~\ref{c:bir-men}, 
which was obtained previously by Birman--Menasco as an application of their braid foliation 
techniques~\cite[Theorem~1]{BM92}. Grigsby and Wehrli gave two further proofs of Corollary~\ref{c:bir-men}, one using the fact 
that finitely generated free groups are not isomorphic to any of their nontrivial quotients, and the other 
using Khovanov homology~\cite{GW13}. 

\begin{cor}\label{c:bir-men}
Let $b \in B_n$ be a braid on $n$ strands such that its closure $\hat b$ is the trivial link $U_n$ 
with $n$ components. Then, $b$ is the identity. 
\end{cor}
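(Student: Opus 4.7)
The plan is to reduce to Corollary~\ref{c:main} by producing, from $b$, an open book decomposition of $\#^n S^2\x S^1$ whose page is a disk with $n$ holes and whose monodromy is a lift of $b$; triviality of the monodromy will then force $b$ itself to be trivial.

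Since the number of components of $\hat b$ equals the number of cycles of the induced permutation, the hypothesis $\hat b=U_n$ forces $b$ to be a pure braid. Let $\Si_n$ be a disk with $n$ open disks removed, so that $b_1(\Si_n)=n$ and $\del\Si_n$ has $n+1$ components. The natural homomorphism from $\pi_0\Diff^+(\Si_n,\del\Si_n)$ to the pure braid group on $n$ strands is surjective, with kernel generated by Dehn twists about the $n$ inner boundary components; in particular $b$ admits lifts $\varphi\in\Diff^+(\Si_n,\del\Si_n)$, unique up to these boundary-parallel Dehn twists.

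The central step is to compute $M_{(\Si_n,\varphi)}$ for a well-chosen lift. I will identify $N_\varphi$ with the complement in $S^3$ of an open tubular neighborhood of $A\cup\hat b$, where $A$ is the braid axis, and then analyze the two kinds of Dehn filling built into the open book construction. Filling the outer torus restores the neighborhood of $A$ and produces the complement of $\hat b$ in $S^3$, while each of the $n$ inner-torus fillings performs Dehn surgery on the corresponding component of $\hat b$ with slope equal to the page framing. Pre-composing $\varphi$ with a Dehn twist parallel to the $i$-th inner boundary shifts the $i$-th page framing by one, so by such an adjustment I can arrange that on every component of $\hat b$ the page framing coincides with the Seifert framing. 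With this choice, $M_{(\Si_n,\varphi)}$ is the $0$-surgery on $\hat b$ in $S^3$, which equals $\#^n S^2\x S^1$ since $\hat b=U_n$.

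Corollary~\ref{c:main} now applies, because $b_1(\Si_n)=n$ matches the number of $S^2\x S^1$ summands, and forces $\varphi$ to be trivial in $\pi_0\Diff^+(\Si_n,\del\Si_n)$; projecting via the surjection onto the pure braid group then yields that $b$ is trivial. The main obstacle in this plan is the framing-slope bookkeeping in the preceding paragraph: identifying precisely the slope killed by each inner-boundary filling as the page-framing longitude of the corresponding braid component, and verifying that this framing is brought to the Seifert framing by the appropriate Dehn-twist modification of the lift.
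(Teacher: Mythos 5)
Your argument is correct in outline, but it takes a genuinely different route from the paper. The paper passes to the double cover of $S^3$ branched along $\hat b=U_n$: pulling back the trivial open book of $S^3$ yields an open book of $\#^{n-1}S^2\times S^1$ whose page is the double cover of the disk branched over $n$ points, so $b_1(\Si)=n-1$, and whose monodromy is the Birman--Hilden lift of $b$; Corollary~\ref{c:main} kills the lift, and injectivity of $B_n$ into the symmetric mapping class group of the cover kills $b$. You instead stay in the axis complement: after observing that $b$ is pure, you lift it to the mapping class group of the $n$-holed disk and use the boundary Dehn twists generating the kernel of the capping homomorphism onto the pure braid group to normalize the lift so that the open book filling realizes $0$--surgery on $U_n$, i.e.\ $\#^{n}S^2\times S^1$, and then apply Corollary~\ref{c:main} with $k=n$. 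Both reductions are sound; yours trades the Birman--Hilden input for the surgery bookkeeping you flag at the end. That bookkeeping does close up, but one point of your wording needs care: the curve killed by the $i$-th inner filling is the section $\{p\}\times S^1$ of the boundary torus of a tubular neighborhood of the $i$-th component $K_i$ of $\hat b$, determined by the product structure of the mapping torus near the boundary; this is a longitude of $K_i$, whereas the literal page boundary circle on that torus is a meridian of $K_i$ (it bounds a small disk in the disk page of the axis fibration). Reading your ``page framing'' as the longitudinal push-off of $K_i$ along the pages, the two claims you need --- that each inner filling is an integral surgery on $K_i$, and that precomposing the lift with the $i$-th boundary-parallel Dehn twist shifts that integer by one while leaving the underlying braid unchanged --- are both correct, so the adjustment to the Seifert framing and the final appeal to Corollary~\ref{c:main} go through.
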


\begin{proof}
Put $\hat b$ in braid form with respect to the binding of the trivial open book decomposition 
of $S^3$ and consider the two--fold branched cover $\Si(\hat b)$ along $\hat b$. Then, 
\[
\Sigma(\hat{b}) = \Sigma(U_n) = \#^{n-1} S^2 \times S^1.
\]
Pulling back the trivial open book of $S^3$ to $\Sigma(\hat{b})$ we obtain an open book decomposition of 
$ \#^{n-1} S^2 \times S^1$, whose page is a surface $\Si$ with $b_1(\Si)=n-1$, which we view as a 
2--fold branched cover of the disk with $n$ branch points. Under the identificaton of $B_n$ with the subgroup of the 
mapping class group of $\Si$ given by the elements commuting with the covering involution~\cite{BH73}, 
the monodromy of the open book is equal to $b$. By Corollary~\ref{c:main}, the braid $b$ must be 
the identity in $B_n$.
\end{proof}

Let $\Si$ and $\Si'$ be two orientable surfaces. By performing a boundary connected sum 
between them we obtain a surface $\Si\natural\Si'$.
If $\varphi$ is a diffeomorphism of $\Si$, $\psi$ is a diffeomorphism of 
$\Si'$ and both $\varphi$ and $\psi$ are the identity on a neighborhood of the  
boundary, we can form a diffeomorphism $\varphi \natural \psi$ of $\Sigma\natural\Si'$. 
This geometric operation yields a homomorphism 
\[ \Gamma_\Si \times \Gamma_{\Si'} \to \Gamma_{\Si\natural\Si'}, \]
which we will call {\em boundary connected sum homomorphism.}
A combination of Inequality~\eqref{e:b1ineq} with Corollary~\ref{c:main} yields the following 
Corollary~\ref{c:injectivity}, which can also be proved e.g.~applying~\cite[Corollary~4.2 (iii)]{PR00}.

\begin{cor}\label{c:injectivity}
Let $\Gamma_\Si$ be the mapping class group of the orientable surface $\Si$. 
Then, the boundary connected sum homomorphism 
\[
\Gamma_\Si \times \Gamma_{\Si'} \to \Gamma_{\Si\natural\Si'}
\] 
is injective.
\end{cor}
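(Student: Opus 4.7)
The plan is to suppose $(\varphi,\psi)$ lies in the kernel of the boundary connected sum homomorphism---i.e.\ $\varphi\natural\psi$ is isotopic to the identity rel boundary---and to deduce, via Inequality~\eqref{e:b1ineq} and Corollary~\ref{c:main}, that both $\varphi$ and $\psi$ are themselves isotopic to the identity.

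The key geometric ingredient I will invoke is the standard fact that boundary connected sum of pages corresponds to connected sum of $3$--manifolds, namely
\[
M_{(\Si\natural\Si',\, \varphi\natural\psi)} \;=\; M_{(\Si,\varphi)} \,\#\, M_{(\Si',\psi)}.
\]
A second classical fact I would use is that a trivial--monodromy open book with page $\Xi$ produces a connected sum of $S^2\x S^1$'s: splitting $S^1$ into two arcs exhibits $M_{(\Xi,\id)}$ as the double of the handlebody $\Xi\x[0,1/2]$ of genus $b_1(\Xi)$, so
\[
M_{(\Xi,\id)} \;=\; \#^{b_1(\Xi)} S^2\x S^1.
\]

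Applying these two facts with $\Xi=\Si\natural\Si'$ and using $b_1(\Si\natural\Si')=b_1(\Si)+b_1(\Si')$, the hypothesis $\varphi\natural\psi\sim\id$ gives
\[
M_{(\Si,\varphi)} \,\#\, M_{(\Si',\psi)} \;=\; \#^{b_1(\Si)+b_1(\Si')} S^2\x S^1.
\]
Uniqueness of the prime factorization~\eqref{e:factor} then forces each summand on the left to itself be a connected sum of $S^2\x S^1$'s, say $\#^{k_1} S^2\x S^1$ and $\#^{k_2} S^2\x S^1$ with $k_1+k_2=b_1(\Si)+b_1(\Si')$. Inequality~\eqref{e:b1ineq} applied to the open books $(\Si,\varphi)$ and $(\Si',\psi)$ gives $k_1\leq b_1(\Si)$ and $k_2\leq b_1(\Si')$, so equality must hold on each factor: $k_1=b_1(\Si)$ and $k_2=b_1(\Si')$. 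Corollary~\ref{c:main} then implies that both $\varphi$ and $\psi$ are trivial, yielding injectivity.

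The only nontrivial content is the verification of the two geometric facts invoked at the start; the remainder of the argument is a direct assembly of prime decomposition with Inequality~\eqref{e:b1ineq} and Corollary~\ref{c:main}. Both facts are well known in the open--book literature and would warrant at most a one--sentence indication in the written proof.
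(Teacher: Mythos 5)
Your proposal is correct and follows essentially the same route as the paper: translate $\varphi\natural\psi\sim\id$ into the statement $M_{(\Si,\varphi)}\#M_{(\Si',\psi)}=\#^{b_1(\Si)+b_1(\Si')}S^2\x S^1$, invoke uniqueness of prime factorization together with Inequality~\eqref{e:b1ineq} to pin down each summand as $\#^{b_1(\Si)}S^2\x S^1$ and $\#^{b_1(\Si')}S^2\x S^1$ respectively, and conclude via Corollary~\ref{c:main}. The two geometric facts you flag as "well known" are exactly the ones the paper also states without detailed proof.
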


\begin{proof}
Under the map $(\Si,\varphi)\to M_{(\Si,\varphi)}$ described above, 
boundary connected sum of abstract open books corresponds to connected sum of 
$3$-manifolds:
\[
M_{(\Si\natural\Si', \varphi \natural \psi)} = M_{(\Si,\varphi)} \# M_{(\Si',\psi)}.
\]
Observe that $b_1(\Si\natural\Si') = b_1(\Si) + b_1(\Si')$. Therefore, 
if $\varphi \natural \psi$ is isotopic to the identity relative to the boundary then 
$M_{(\Si\natural\Si', \varphi \natural \psi)}$ is diffeomorphic to $\#^{b_1(\Si) + b_1(\Si')} S^2 \times S^1$. 
The uniqueness of the prime factorization for $3$--manifolds~\cite{He76} 
implies that $M_{(\Si,\varphi)} = \#^k S^2 \times S^1$ and $M_{(\Si',\psi)} = \#^l S^2 \times S^1$
for some non--negative integers $k, l$ such that $k+l = b_1(\Si)+b_1(\Si')$. By 
Inequality~\eqref{e:b1ineq} we have $k \le b_1(\Si)$ and $l \le b_1(\Si')$, which forces 
$k=b_1(\Si)$ and $l=b_1(\Si')$ as the only possibility. Corollary~\ref{c:main} implies 
that $\varphi$ and $\psi$ are isotopic to the identity. 
\end{proof}

The rest of the paper is organized as follows. In Section~\ref{s:further} we recall
two well known results independent of 
Theorem~\ref{t:main}, i.e.~Propositions~\ref{p:spheres-B} and~\ref{p:intersection}. Proposition~\ref{p:spheres-B} 
shows that any embedded 2--sphere disjoint from the binding of an open book decomposition 
is homologically trivial. Proposition~\ref{p:intersection} is a refinement of Inequality~\eqref{e:b1ineq} 
and can be viewed as saying that the homology of a closed, oriented, connected 3--manifold $M$ 
puts homological constrains on the monodromy of any open book decomposition of $M$. 
In Section~\ref{s:proof} we prove Theorem~\ref{t:main}. 

{\bf Acknowledgements:} the authors wish to thank the anonymous referees for valuable comments. 
The present work is part of the authors' activities within CAST, a Research Network
Program of the European Science Foundation. The first author was partially supported by the ERC grant ``Geodycon''. The second author was partially supported by 
the PRIN--MIUR research project 2010--2011 ``Variet\`a reali e complesse: geometria, 
topologia e analisi armonica''. 

\section{Non--separating 2--spheres and a refinement of Inequality~\eqref{e:b1ineq}}\label{s:further}

Given a closed, oriented, connected 3--manifold $M$ endowed with an 
open book decomposition $(B,\pi)$ and having a prime factorization as in~\eqref{e:factor}, 
one of the first questions one could ask is how a non--separating 2--sphere $S$ in $M$ can be 
positioned with respect to the binding $B$. Since $B$ is homologically trivial in $M$, 
the following proposition implies that, possibly after a small isotopy, each such $S$ 
must intersect $B$ transversally at least twice. 

\begin{prop} \label{p:spheres-B}
Let $(B,\pi)$ be an open book decomposition with page $\Si$ and monodromy $\varphi$ of a closed, 
oriented, connected 3--manifold $M$. Then, each embedded 2--sphere $S \subset M\setminus B$ bounds an embedded ball in $M \setminus B$ and, in particular, is homologically trivial in $M$.
\end{prop}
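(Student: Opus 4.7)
The strategy is to use the fibration $\pi \co M \setminus B \to S^1$ to isotope $S$ into a product region, and then invoke irreducibility of a thickened surface. After a small isotopy we may assume $S$ lies in the interior of the compact mapping torus $N_\varphi \subset M \setminus B$. The composition $\pi|_S \co S \to S^1$ represents a class in $[S^2, S^1] = H^1(S^2;\Z) = 0$ and is therefore null-homotopic; transversality lets us further assume $S$ meets a chosen page $\Si_{\th_0}$ in a finite disjoint union of circles, all lying in the interior of the page (since $\del\Si_{\th_0} \subset B$ while $S \cap B = \emptyset$).

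A standard innermost-disk argument then removes all these intersections. Since $S$ is a 2-sphere, each intersection circle bounds a disk in $S$ and is therefore null-homotopic in $N_\varphi$. The long exact sequence for the fibration $\Si \to N_\varphi \to S^1$, together with $\pi_2(S^1) = 0$, yields the $\pi_1$-injection $\pi_1(\Si_{\th_0}) \hra \pi_1(N_\varphi)$, so every such circle also bounds an embedded disk in $\Si_{\th_0}$. Picking an innermost such disk $D' \subset \Si_{\th_0}$ and pushing the subdisk of $S$ cobounded by $\del D'$ across $D'$ strictly decreases $|S \cap \Si_{\th_0}|$; iterating, $S$ is isotoped within $M \setminus B$ to be disjoint from $\Si_{\th_0}$.

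Cutting the mapping torus along this page identifies $N_\varphi \setminus \Si_{\th_0}$ with the product $\Si \x (0,1)$. The thickened surface $\Si \x [0,1]$ is a well-known irreducible 3-manifold -- it is a handlebody when $\Si$ is planar, and more generally is aspherical with incompressible boundary -- so $S$ bounds an embedded 3-ball inside $\Si \x (0,1) \subset N_\varphi \subset M \setminus B$. This ball is disjoint from $B$, as required. The only non-routine step is the innermost-disk reduction, whose key ingredient -- the $\pi_1$-injectivity of a page in $N_\varphi$ -- is supplied for free by the fibration structure over $S^1$; everything else (transversality, compactness, irreducibility of $\Si \x I$) is classical 3-manifold topology.
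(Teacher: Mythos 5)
Your overall strategy (isotope $S$ off a page and use irreducibility of the complementary product region) is genuinely different from the paper's, which instead notes that the universal cover of $N_\varphi$ is $\R^3$, hence irreducible, and quotes the standard fact that a manifold covered by an irreducible manifold is irreducible. Unfortunately your key step, the innermost-disk reduction, has a real gap. Having chosen a disk $D'\subset\Si_{\th_0}$ innermost on the page, you ``push the subdisk of $S$ cobounded by $\del D'$ across $D'$'' and call this an isotopy of $S$. For that move to be an isotopy you need the embedded sphere $D'\cup D''$ (where $D''$ is the chosen subdisk of $S$) to bound a ball on the appropriate side, and indeed a ball whose interior misses the rest of $S$; otherwise $D''$ need not be isotopic rel boundary to a push-off of $D'$. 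The existence of such a ball is an instance of precisely the irreducibility statement you are in the middle of proving, so as written the argument is circular. The standard repair is to \emph{surger} rather than isotope: cut $S$ along $c=\del D'$ into two spheres $S_1=D''_1\cup D'$ and $S_2=D''_2\cup D'$, each meeting $\Si_{\th_0}$ in strictly fewer circles, obtain balls for each by induction on $|S\cap\Si_{\th_0}|$ (your handlebody observation is the base case), and then recombine the two balls into a ball bounded by $S$, treating separately the cases where the balls are disjoint or nested. That recombination argument is essentially the content of the covering-space irreducibility lemma the paper invokes, so the fixed proof is not shorter than the paper's.

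Two smaller points. The observation that $\pi|_S$ is null-homotopic does nothing for you; transversality alone gives that $S\cap\Si_{\th_0}$ is a finite union of circles. And $\Si\x[0,1]$ is a handlebody for \emph{every} compact orientable $\Si$ with nonempty boundary (of genus $b_1(\Si)$), not only for planar $\Si$; you should lean on that, since handlebody irreducibility is classical via Alexander's theorem, whereas your alternative justification is shaky: the boundary of $\Si\x I$ is compressible, not incompressible, and deducing irreducibility from asphericity in general requires the Poincar\'e conjecture. On the positive side, your use of the fibration over $S^1$ to get $\pi_1$-injectivity of the page, and hence that each circle of $S\cap\Si_{\th_0}$ bounds an embedded disk in the page, is correct and is exactly the ingredient the surgery induction needs.
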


\begin{proof}
Recall that $M = N_{\varphi} \cup V$, where $V$ is a tubular neighborhood of the 
binding. Up to an isotopy of $S$, we can assume $S \subset N_\varphi$.  The universal cover of $N_\varphi$ is homeomorphic to $\R^3$ and from this the triviality of $[S]$ in
$H_2(M \setminus B)$, and therefore in $H_2(M)$, follows immediately. 

In order to prove that $S$ bounds a ball in $M \setminus B$ we need to use some
basic results in three--dimensional topology. In fact $\R^3$ is irreducible~\cite[Theorem~1.1]{Ha00} 
and this implies~\cite[Proposition~1.6]{Ha00} that $N_\varphi$ is also irreducible, 
therefore $S$ bounds an embedded ball in $N_\varphi$.
\end{proof}

We now establish a result which refines Inequality~\eqref{e:b1ineq}. Proposition~\ref{p:intersection} below can 
be viewed as saying that the homology of a closed, oriented, connected 3--manifold $M$ puts homological 
constraints on the monodromy of any open book decomposition of $M$.

For the rest of this section all homology groups will be taken with coefficients in the field $\Q$ of rational numbers
unless specified otherwise. Let $H_1(\Si, \partial \Sigma)^{\varphi}$ denote the subspace of $H_1(\Si,\del\Si)$ consisting 
of the elements fixed by the map 
\[
\varphi_* \colon H_1(\Sigma, \partial \Sigma) \to H_1(\Sigma, \partial \Sigma)
\] 
induced by the monodromy $\varphi\co\Si\to\Si$. 

\begin{prop}\label{p:intersection}
Let $(B,\pi)$ be an open book decomposition with page $\Si$ and monodromy $\varphi$ of a closed, oriented, connected 
3--manifold $M$. Then, 
\[
b_1(M) = \dim_\Q H_1(\Si,\del\Si)^\varphi.
\] 
More precisely, there is an isomorphism $H_2(M)\cong  H_1(\Sigma, \partial \Sigma)^\varphi$ induced by 
a well--defined map $H_2(M;\Z) \to H_1(\Sigma, \partial \Sigma;\Z)^\varphi$ given by $\al\mapsto [F\cap\Si]$, 
where $F\subset M$ is any closed, oriented and properly embedded surface which represents $\al$  
and intersects the page $\Sigma \times \{ 0 \}$ transversally.
\end{prop}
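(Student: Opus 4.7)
My plan is to construct the map $\Phi \colon H_2(M;\Z) \to H_1(\Si,\del\Si;\Z)$ by the intersection formula $\Phi(\al) = [F \cap \Si]$, verify that it lands in the $\varphi$-invariant subspace, and establish the isomorphism over $\Q$ by identifying $\Phi \otimes \Q$ with the Poincar\'e dual of the cohomology restriction $i^* \colon H^1(M;\Q) \to H^1(\Si;\Q)$ induced by inclusion of a page.

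For well-definedness I would represent $\al$ by an embedded closed oriented surface $F \subset M$, made transverse both to the page $\Si_0$ and to the binding $B$; the intersection $F \cap \Si_0$ is then a properly embedded $1$-submanifold of $\Si_0$ with boundary $F \cap B \subset \del\Si_0$, defining a class in $H_1(\Si,\del\Si;\Z)$. Independence from the choice of representative is a standard bordism argument: two representatives $F_1, F_2$ cobound a $3$-chain $W \subset M$ which, made transverse to $\Si_0$, yields $W \cap \Si_0$ as a $2$-chain witnessing $[F_1 \cap \Si_0] = [F_2 \cap \Si_0]$ modulo $\del\Si_0$. For $\varphi$-invariance, as the page parameter $\theta \in S^1$ varies, the slices $F \cap \Si_\theta$ change by ambient isotopies punctuated by Morse modifications at critical values of $\pi|_F$, both of which preserve the class in $H_1(\Si_\theta,\del\Si_\theta)$; comparing $\theta = 0$ with $\theta = 1$ via the monodromy identification then yields $\varphi_*[F \cap \Si_0] = [F \cap \Si_0]$.

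To establish that $\Phi$ induces an isomorphism over $\Q$, I would observe that since $\varphi$ restricts to the identity on $\del\Si$, the Poincar\'e--Lefschetz duality $H^1(\Si;\Q) \cong H_1(\Si,\del\Si;\Q)$ is $\varphi$-equivariant; the intersection-pairing characterization of $\Phi$ then identifies $\Phi \otimes \Q$ with the Poincar\'e dual of $i^* \colon H^1(M;\Q) \to H^1(\Si;\Q)$. The image of $i^*$ lies in $H^1(\Si;\Q)^{\varphi^*}$ because different pages are ambiently isotopic in $M$, with the return isotopy realizing $\varphi^*$. To show that $i^*$ surjects onto $H^1(\Si;\Q)^{\varphi^*}$ and is injective, I would combine the Wang exact sequence of the fibration $\Si \to N_\varphi \to S^1$, which presents $H^1(N_\varphi;\Q)$ as an extension of $H^1(\Si;\Q)^{\varphi^*}$ by $H^1(S^1;\Q)$, with a Mayer--Vietoris computation for $M = N_\varphi \cup V$, $V = \del\Si \times D^2$: the base factor $H^1(S^1;\Q)$ is killed in $M$ by the meridian disks of the solid tori making up $V$, leaving exactly $H^1(\Si;\Q)^{\varphi^*}$ as the image of $i^*$.

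The main obstacle will be carefully verifying the identification of $\Phi$ with the Poincar\'e dual of $i^*$: the relative nature of the target $H_1(\Si,\del\Si)$ requires care at the boundary $\del\Si$ where the pieces $F \cap B$ live, and one must check that the $\varphi$-equivariance of Lefschetz duality aligns the $\varphi^*$-action on $H^1(\Si)$ with the $\varphi_*$-action on $H_1(\Si,\del\Si)$ in a way that matches the geometrically defined map.
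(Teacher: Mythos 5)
Your construction of $\Phi$, the bordism argument for well--definedness, the $\varphi$--invariance of the image, the duality identification with $i^*\colon H^1(M;\Q)\to H^1(\Si;\Q)$, and the injectivity of $i^*$ are all sound, and they run parallel to what the paper does (the paper also proves injectivity via the Mayer--Vietoris/excision comparison of $M=N_\varphi\cup V$ with $N_\varphi$, and realizes the map geometrically by intersecting with a page at the very end). The genuine gap is in your surjectivity step. You assert that in the Mayer--Vietoris for $M=N_\varphi\cup V$ ``the base factor $H^1(S^1;\Q)$ is killed by the meridian disks, leaving exactly $H^1(\Si;\Q)^{\varphi^*}$ as the image of $i^*$.'' This conflates a \emph{one}--dimensional base factor with $|\del\Si|$ independent meridian conditions. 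A class $\al\in H^1(N_\varphi;\Q)$ extends over $V$ if and only if it vanishes on every meridian $\mu_i=\{p_i\}\times S^1$ with $p_i\in\del_i\Si$; subtracting $c\,\pi^*[dt]$ shifts all the values $\al([\mu_i])$ by the same constant $c$, so only one of these conditions can be absorbed into the base factor. The differences $\al([\mu_i])-\al([\mu_j])$ equal the pairing of $\al|_\Si$ with the fiber cycle $[\varphi\circ\ga-\ga]$, where $\ga$ is an arc in $\Si$ joining $\del_j\Si$ to $\del_i\Si$, and these numbers need not vanish for an arbitrary element of $H^1(\Si;\Q)^{\varphi^*}$. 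Your argument is therefore complete only when $\del\Si$ is connected; this is exactly the point at which the paper's own proof switches to an induction on $|\del\Si|$, using Hopf plumbing to reduce to the connected--boundary case rather than arguing directly.

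To see that the missing step is not a formality, consider the positive Hopf band: $\Si$ an annulus, $\varphi$ a single Dehn twist along the core, $M=S^3$. Here $\varphi^*$ acts trivially on $H^1(\Si;\Q)\cong\Q$, so $H^1(\Si;\Q)^{\varphi^*}=\Q$, while $H^1(S^3;\Q)=0$; thus $i^*$ does not surject onto $H^1(\Si;\Q)^{\varphi^*}$, and your claimed description of the image fails. (Note that the same example puts pressure on the literal statement as well, since every boundary--fixing diffeomorphism of the annulus acts trivially on $H_1(\Si,\del\Si)$: the subspace that actually computes $b_1(M)$ is the kernel of $\varphi_*-\id$ viewed as a map $H_1(\Si,\del\Si)\to H_1(\Si)$, which is well defined because $\varphi$ fixes $\del\Si$ pointwise, and which can be strictly smaller than the fixed subspace of $\varphi_*$ on $H_1(\Si,\del\Si)$ when $\del\Si$ is disconnected. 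You should keep track of this distinction when repairing the surjectivity argument; with that corrected target, the extra meridian conditions described above are precisely what cuts $H^1(\Si;\Q)^{\varphi^*}$ down to the image of $i^*$.)
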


\begin{proof}
We can view $N_{\varphi}$ as the union of  $\Sigma \times [0, 1/2]$ and  $\Sigma \times 
[1/2, 1]$ with $(x, 1)$ identified to $(\varphi(x), 0)$. Using the fact that $\Sigma$ times an interval 
is homotopically equivalent to $\Si$, the (relative) Mayer--Vietoris sequence 
for this splitting gives the following exact sequence:
\[
H_2(\Sigma, \partial \Sigma)^2 \stackrel{f_1} \longrightarrow H_2(N_{\varphi}, \del N_{\varphi}) 
\stackrel{f_2} \longrightarrow H_1(\Sigma, \partial \Sigma)^2 
\stackrel{f_3} \longrightarrow H_1(\Sigma, \partial \Sigma)^2.
\]
The map $f_3$ is given by the matrix
\[
\left ( \begin{matrix} Id & Id \\ \varphi_* &  Id \end{matrix} \right )\in M_2(\End(H_1(\Si,\del\Si))).
\]
This immediately implies that the image of $f_2$ is isomorphic to $H_1(\Sigma, \partial \Sigma)^{\varphi}$.

Recall the decomposition $M = N_{\varphi} \cup V$, where $V$ is a tubular neighborhood of the 
binding. Since $H_2(V)=\{0\}$, the homology exact sequence for the pair $(M, V)$ implies that the map
$g\co H_2(M) \to H_2(M, V)$ induced by the inclusion map is injective. On the other hand, by excision the inclusion 
$N_\varphi \subset M$ induces an isomorphism $\psi\co H_2(N_{\varphi}, \partial N_{\varphi})\stackrel{\cong} \longrightarrow H_2(M, V)$. 
Moreover, it is easy to see that the image of the map $\psi\circ f_1$ maps injectively to $H_1(V)$ under 
the next map $\de\co H_2(M,V)\to H_1(V)$ in the exact sequence of the pair, while the image of $g$ maps trivially. 
This shows that the images of $f_1$ and of $\psi^{-1}\circ g$ have trivial intersection. 
Therefore the composition $f_2\circ\psi^{-1}\circ g$ sends $H_2(M)$ injectively into  
the image of the map $f_2$, which, as we have just shown, is isomorphic to $H_2(\Sigma, \partial \Sigma)^{\varphi}$.

We claim that $f_2\circ\psi^{-1}\circ g$ sends $H_2(M)$ also surjectively onto the image of $f_2$. 
In order to verify this, we argue by induction. Assume first that $\del\Si$ is connected. In this situation the map  
$\de\circ\psi\circ f_1$ is clearly surjective. Therefore, if $x\in H_2(N_\varphi, \del N_\varphi)$ with $f_2(x)\neq 0$, 
there exists $y\in H_2(\Sigma, \partial \Sigma)^2$ with $\de\circ\psi\circ f_1(y) = \de\circ\psi(x)$. It follows that    
setting $x' = x - f_1(y)$ we have $f_2(x') = f_2(x)$ and $\de\circ\psi(x') = 0$; therefore $x'$ is in 
the image of $\psi^{-1}\circ g$, and the claim is proved when $\del\Si$ is connected. 

Now assume $\del\Si$ is disconnected and denote by $|\del\Si|$  the number of its connected components. By the inductive 
hypothesis we assume that the claim holds for open books with $|\partial \Sigma|-1$ binding components. 
Let $(\widehat\Si, \widehat\varphi)$ be another abstract open book, constructed as follows. 
The connected, oriented surface $\widehat\Si$ is obtained 
by attaching a 2--dimensional 1--handle $h$ to $\del\Si$ so that $|\del\widehat\Si| = |\del\Si|-1$, while 
$\widehat\varphi$ is defined by first extending $\varphi$ as the identity over $h$, and then composing with a 
(positive or negative) Dehn twist along a simple closed curve in $\widehat\Si$ 
which intersects the cocore $c$ of $h$ transversely once.  It is a well--known fact that the open book 
decomposition $(\widehat B,\widehat\pi)$ associated to $(\widehat\Si, \widehat\varphi)$ is obtained 
from the open book decomposition $(B,\pi)$ associated to $(\Si, \varphi)$ by plumbing with a Hopf band, 
and that $M_{(\widehat\Si,\widehat\varphi)}$ is diffeomorphic to $M$ (see e.g.~\cite{GG06}). 
We can choose a basis $[c_1],\ldots, [c_{b_1(\Si)}]$ of $H_1(\Si,\del\Si)$ such that each $c_i\subset\Si$ is a properly 
embedded arc disjoint from $\ga\cap\Si$, and so that, viewing the classes $[c_i]$ in $H_1(\widehat\Si,\del\widehat\Si)$, 
when we add $[c]$ we obtain a basis of $H_1(\widehat\Si,\del\widehat\Si)$. Using this basis one can easily check that 
the natural inclusion map $H_1(\Si,\del\Si)\to H_1(\widehat\Si,\del\widehat\Si)$ restricts to an isomorphism
\[
H_1(\Si,\del\Si)^\varphi\cong H_1(\widehat\Si,\del\widehat\Si)^{\widehat\varphi}.
\]
Since $|\del\widehat\Si| = |\del\Si|-1$, by the inductive assumption we have 
$b_1(M) = \dim_{\Q} H_1(\widehat\Si,\del\widehat\Si)^{\widehat\varphi}$. This proves the claim in 
full generality. Finally, observe that the maps $f_2$ and $f_2\circ\psi^{-1}\circ g$ are well--defined over 
the integers. If we represent homology classes in $H_2(N_{\varphi}, \partial N_{\varphi};\Z)$ 
and $H_2(M;\Z)$ by oriented, properly embedded surfaces intersecting the page $\Sigma \times \{ 0 \}$ transversally 
and we follow the construction of the connecting homomorphism, we see that the maps 
$f_2$ and $f_2\circ\psi^{-1}\circ g$ are both realized geometrically by intersecting with $\Sigma \times \{ 0 \}$. 
This concludes the proof.
\end{proof}

\section{The proof of Theorem~\ref{t:main}}\label{s:proof} 

We start by recalling a basic result of Reidemeister and Singer about collections of compressing 
disks in a handlebody. We refer to~\cite{Jo06} for a modern presentation of this material. 
Let $H_g$ be a 3--dimensional handlebody of genus $g$. A properly embedded disk 
$D\subset H_g$ is {\em essential} if $\del D$ does not bound a disk in $\del H_g$. 

\begin{defn}\label{d:minsysdis}
A collection $\{D_1,\ldots, D_g\}\subset H_g$ of $g$ properly embedded, pairwise disjoint essential disks is a 
{\em minimal system of disks} for $H_g$ if the complement of a regular neighborhood of 
$\bigcup_i D_i$ in $H_g$ is homeomorphic to a 3--dimensional ball. 
\end{defn}

Let $D_1, D_2\subset H$ be properly embedded, essential disks in the handlebody $H_g$. 
Let $a\subset\del H$ be an embedded arc with one endpoint on $\del D_1$ and the other 
endpoint on $\del D_2$. Let $N$ be the closure of a regular neighborhood of $D_1\cup D_2\cup a$ 
in $H$. Then, $N$ is homeomorphic to a closed $3$--ball, and it intersects $\del H_g$ in a subset of 
$\del N$ homeomorphic to a three--punctured 2--sphere. The complement $\del N\setminus \del H_g$ 
of this subset consists of the disjoint union of three disks, two of which are isotopic to $D_1$ and $D_2$ respectively, 
and the third one is denoted by ${D_1}*_a D_2$. See Figure~\ref{f:disk-slide}.
\begin{figure}[ht]
\labellist
\hair 2pt
\pinlabel $D_1$ at 155 270
\pinlabel $D_2$ at 560 270
\pinlabel ${D_1}{*_a}D_2$ at 360 215
\pinlabel $a$ at 360 287
\endlabellist
\centering
\includegraphics[scale=0.4]{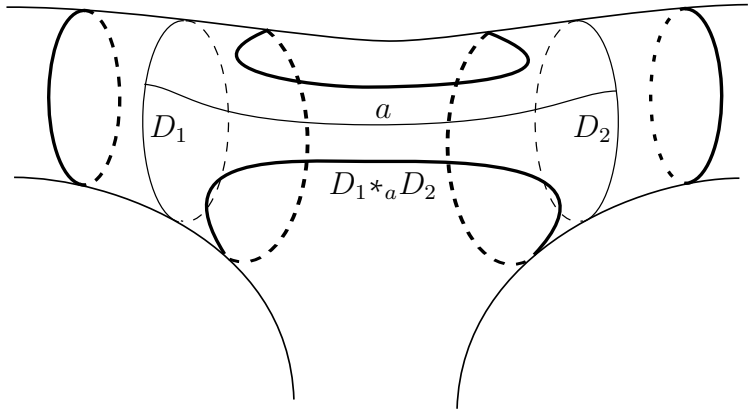}
\caption{A disk slide}
\label{f:disk-slide}
\end{figure}
Let ${\bf D}  = \{D_1,\ldots, D_g\}$ be a minimal system of disks for a handlebody 
$H_g$, $a\subset H_g$ an embedded arc with one endpoint on $\del D_i$, the other endpoint 
on $\del D_j$, with $i\neq j$, and the interior of $a$ disjoint from $\bigcup_i \del D_i$. Then, removing 
either $D_i$ or $D_j$ from ${\bf D}$ and adding $D_i *_a D_j$ yields a new minimal system 
of disks ${\bf D'}$ for $H_g$, well--defined up to isotopy~\cite[Corollary~2.11]{Jo06}. In this situation we say that ${\bf D'}$ is 
obtained from ${\bf D}$ by a {\em disk slide}. 

\begin{defn}\label{d:slideequiv}
Two minimal systems of disks for $H_g$ are {\em slide equivalent} if they 
are connected by a finite sequence ${\bf D}_1,\ldots,{\bf D}_m$ such that 
${\bf D}_{i+1}$ is obtained from ${\bf D}_i$ by a disk slide for each $i$. 
\end{defn}

To prove Theorem~\ref{t:main} we need the following result (see~\cite[Theorem~2.13]{Jo06} 
for a modern exposition). 

\begin{thm}[\cite{Re33, Si33}]\label{t:RS}
Any two minimal systems of disks for a handlebody are slide equivalent. 
\qed\end{thm}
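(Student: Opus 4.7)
The plan is to prove Theorem~\ref{t:RS} by a double induction, first reducing the intersection $\mathbf{D}\cap\mathbf{D'}$ of two minimal systems of disks to a disjoint union of arcs, and then reducing the number of arcs to zero by disk slides. First I would isotope $\mathbf{D'}$ so that it meets $\mathbf{D}$ transversely; the intersection is then a disjoint union of finitely many properly embedded arcs and simple closed curves in the interiors of the disks. To eliminate a simple closed curve of intersection, choose one that is innermost on some $D'_j$, bounding a subdisk $\De\subset D'_j$ whose interior is disjoint from $\mathbf{D}$. Since this curve also lies on some disk $D_i$, it bounds a subdisk $\De'\subset D_i$. The 2--sphere $\De\cup\De'$ bounds a 3--ball in $H_g$ by irreducibility of the handlebody, and pushing $\De$ across this ball removes the intersection circle without creating new ones. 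Iterating reduces to the situation in which $\mathbf{D}\cap\mathbf{D'}$ consists only of arcs.

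For the inductive step on arcs, suppose the collection of arc intersections is nonempty, and pick an outermost arc $\al$ on some $D'_j$: it cuts off a subdisk $E\subset D'_j$ bounded by $\al$ and a subarc $\be\subset\del H_g$, with $\mathrm{int}(E)\cap\mathbf{D}=\emptyset$. Say $\al\subset D_i$; then $\al$ splits $D_i$ into two subdisks, one of which combines with $E$ to form a new properly embedded disk $\widetilde D_i\subset H_g$. As $\be$ runs along $\del H_g$ from one endpoint on $\del D_i$ to the other, it crosses the boundary circles $\del D_{k_1},\ldots,\del D_{k_r}$ in sequence, and these crossings exhibit $\widetilde D_i$ as the outcome of a sequence of $r$ elementary disk slides of $D_i$ over $D_{k_1},\ldots,D_{k_r}$. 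Replacing $D_i$ with $\widetilde D_i$ produces a new minimal system $\widetilde{\mathbf{D}}$ that is slide equivalent to $\mathbf{D}$, and one checks that $|\widetilde{\mathbf{D}}\cap\mathbf{D'}|<|\mathbf{D}\cap\mathbf{D'}|$, because the arc $\al$ is eliminated and no new intersections are introduced thanks to $\mathrm{int}(E)\cap\mathbf{D}=\emptyset$.

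Iterating the inductive step reduces to the base case $\mathbf{D}\cap\mathbf{D'}=\emptyset$. In that situation the curves $\bigcup_j\del D'_j$ lie on the $2g$--holed sphere obtained from $\del H_g$ by cutting along $\bigcup_i\del D_i$, and a planar induction on $g$ produces either a disk $D'_j$ isotopic in $H_g$ to some $D_i$ or one obtainable from $\mathbf{D}$ by a single disk slide, yielding slide equivalence of $\mathbf{D}$ and $\mathbf{D'}$. The main obstacle is the inductive arc--reduction step: one has to verify that the replacement of $D_i$ by $\widetilde D_i$ is genuinely realized by a legal sequence of elementary disk slides, keeping track of which $\del D_{k_s}$ is crossed at each stage and checking that every intermediate collection remains a minimal system of disks (in particular, that each slide is applied to two distinct disks, which may require choosing the outermost arc $\al$ appropriately or reordering the slides). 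The base case also requires a separate planar argument to show that two minimal arc systems on a $2g$--holed sphere whose complements are disks are slide equivalent.
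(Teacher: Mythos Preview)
The paper does not prove Theorem~\ref{t:RS}; it is quoted as a classical result of Reidemeister and Singer with a pointer to~\cite{Jo06}, and the statement is immediately closed with a \qed. So there is no proof in the paper to compare your attempt against, and your proposal must be assessed on its own.

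Your overall scheme --- innermost circles, then outermost arcs, then the disjoint case --- is the standard one, but the arc-reduction step does not work as you describe. Since $\al$ is outermost on $D'_j$, near any interior point of $\be\subset\del D'_j$ the disk $D'_j$ coincides locally with $E$; hence an arc of $D'_j\cap D_k$ with an endpoint in $\mathrm{int}(\be)$ would be forced into $\mathrm{int}(E)$, contradicting $\mathrm{int}(E)\cap\mathbf{D}=\emptyset$. Thus $\mathrm{int}(\be)$ meets no $\del D_k$, and your list $\del D_{k_1},\ldots,\del D_{k_r}$ is always empty. The replacement $D_i\rightsquigarrow\widetilde D_i$ is therefore not realised by the slides you propose. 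What actually happens is simpler: cut $H_g$ along the $D_k$ with $k\neq i$ to obtain a solid torus $T$; since $E$ lies on a single side of $D_i$ along $\al$, the arc $\be$ has both endpoints on the same boundary circle of the annulus $\del T\setminus\del D_i$ and is boundary-parallel there. Of the two candidate disks for $\widetilde D_i$, one is inessential and the other is isotopic to $D_i$ in $T$. So the arc-reduction step is pure isotopy --- it shows that any two minimal systems can be isotoped to be disjoint --- and no slides enter here.

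Consequently all genuine slides occur in the base case $\mathbf{D}\cap\mathbf{D'}=\emptyset$, which you only sketch. The ``main obstacle'' you flag in the arc step in fact evaporates; the real content of the theorem lies in the disjoint case, where one must analyse how the circles $\del D'_j$ separate the $2g$ spots $D_i^\pm$ on the boundary sphere of the cut-open ball and extract from this an explicit sequence of slides carrying $\mathbf{D'}$ to $\mathbf{D}$. That part deserves a full argument rather than a one-line gesture.
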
 

We can now start the formal proof of Theorem~\ref{t:main}. The first step is to normalize the 
position of certain non--separating $2$--spheres with respect to a Heegaard splitting. This will be done in the following lemma.

\begin{lemma}\label{standard spheres}
Let $M= H \cup H'$ be a Heegaard splitting of a $3$-manifold $M$ which admits a prime 
factorization 
\begin{equation}\label{e:factor2}
M = M_1\#\cdots\# M_h\# S^2\times S^1\#\stackrel{(k)}{\cdots}\# S^2\times S^1
\end{equation}
with $b_1(M)=k$. Then, there are pairwise disjoint, embedded 2--spheres $S_1, \ldots, S_k$ in $M$  such that each $S_i$ 
intersects the Heegaard surface $\partial H$ in a single circle $C_i$. Moreover, after choosing an orientation 
of each $S_i$, the corresponding 2--homology classes $[S_i]$ generate $H_2(M; \Q)$.
\end{lemma}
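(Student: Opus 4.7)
The plan is to produce the spheres $S_1,\dots,S_k$ by first exhibiting such a non--separating system coming from the prime decomposition, and then applying Haken's normalization theorem to put it in the required position with respect to $\Si = \del H$.

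Since $M$ is closed, oriented and connected with $b_1(M)=k$, Poincar\'e duality gives $\dim_\Q H_2(M;\Q)=k$. Thus it suffices to find $k$ pairwise disjoint, non--separating embedded $2$--spheres whose classes are linearly independent, for then they automatically span $H_2(M;\Q)$. From the prime factorization~\eqref{e:factor2} one obtains such a system $T_1,\dots,T_k \subset M$, where each $T_i$ is a sphere of the form $S^2\times\{*\}$ sitting inside the $i$-th $S^2\times S^1$ summand; linear independence of $[T_1],\dots,[T_k]$ in $H_2(M;\Q)$ is verified by pairing with loops $\ga_j$ traversing once the $S^1$--factor of the $j$-th summand and avoiding the other summands, which yields intersection numbers $T_i\cdot\ga_j = \de_{ij}$.

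The remaining step is to isotope the system so that each sphere meets $\Si$ transversely in exactly one essential simple closed curve while the spheres remain pairwise disjoint. This can be arranged using Haken's theorem~\cite{Ha68}: any pairwise disjoint collection of essential $2$--spheres in a $3$--manifold equipped with a Heegaard splitting $M = H\cup_\Si H'$ can be simultaneously ambient--isotoped into such a normal position. Applied to $\{T_1,\dots,T_k\}$ this produces the desired spheres $S_1,\dots,S_k$ with circles $C_i = S_i\cap\Si$; since ambient isotopy preserves homology classes, the $[S_i]$ still form a basis of $H_2(M;\Q)$.

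The main obstacle is the simultaneous aspect of the normalization: putting the spheres in good position one at a time might reintroduce intersections with already--normalized ones, and this is precisely what Haken's result rules out. An alternative, more hands--on route would be to apply the single--sphere version of Haken's lemma to produce $S_1$ alone, cut $M$ along $S_1$, cap the resulting boundary spheres with $3$--balls to obtain a manifold $M^*$ with $b_1(M^*)=k-1$ and an induced Heegaard splitting, and then iterate; the crucial ingredient in either approach is the same normalization result of Haken.
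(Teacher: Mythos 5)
Your overall strategy (produce spheres from the prime decomposition, then normalize them with respect to the Heegaard surface via Haken) is the same as the paper's, but the normalization step as you state it contains a genuine gap. Haken's theorem does \emph{not} assert that a disjoint collection of essential spheres can be \emph{ambient--isotoped} so that each sphere meets $\del H$ in a single circle. His procedure uses, besides isotopies, the so--called $\rho$--operations, which surger a sphere along a compressing disk and replace it by new spheres; the output collection may be strictly larger than the input (the paper gets $t\geq h-1+k$ spheres), and the individual output spheres are in general neither isotopic nor homologous to the input ones. Consequently your closing step, ``since ambient isotopy preserves homology classes, the $[S_i]$ still form a basis,'' is unjustified: what one actually has to argue, and what the paper extracts from Haken's lemma, is that the $\rho$--operations preserve the property that the classes of the \emph{whole system} generate $H_2(M;\Q)$, after which one selects $k$ of the resulting spheres whose classes form a basis. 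A second, related problem is that the lemma on page~84 of~\cite{Ha68} applies to a \emph{complete system of spheres}, i.e.\ one for which the closure of some complementary component is a punctured ball. Your collection $T_1,\dots,T_k$ is not complete whenever $M':=M_1\#\cdots\#M_h$ is not $S^3$; this is precisely why the paper enlarges the system by the $h-1$ separating spheres of the prime decomposition (plus one further sphere) before invoking Haken.

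Your proposed alternative (normalize one sphere, cut, cap off, iterate) does not repair this: the single--sphere version of Haken's lemma only produces \emph{some} essential sphere meeting $\del H$ in one circle, and when $h\geq 1$ that sphere may well be separating (e.g.\ a connect--sum sphere), in which case cutting along it does not decrease $b_1$ and the induction does not advance. The correct route is the one for a complete system, together with the bookkeeping of how the $\rho$--operations act on second homology.
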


\begin{proof}
Suppose that $M'=M_1 \# \cdots \# M_h$ where each $M_i$ is irreducible. By definition  any 
embedded 2--sphere $S\subset M_i$ bounds a 3--ball. Therefore, if we denote by 
$S_1',\ldots, S_{h-1}'\subset M'$ the separating spheres along which the connected sums 
are performed and $S_h' \subset M'$ is any smoothly embedded 2--sphere disjoint from   
$S_1',\ldots, S_{h-1}'$, then the closure of some component of $M'\setminus\bigcup_{i=1}^h S_i'$ is a punctured 3--ball. 

In the terminology of Haken~\cite{Ha68}, a collection of pairwise disjoint, embedded 
2--spheres with such a property is called a {\em complete system of spheres}. Thus, 
the collection  $S_1',\ldots, S_{h-1}'$ is a complete system of spheres for $M'$.  
If we view each sphere $S_i'$ as contained in $M$ and 
denote by $S_{h-1+i}'\subset M$, for $i=1,\ldots, k$, the embedded 2--sphere
corresponding to $S^2\x \{ 1 \}$ in the $i$--th $S^2\x S^1$--factor of the factorization~\eqref{e:factor2}, 
the whole collection $S_1',\ldots, S_{h-1}', S'_h,\ldots, S_{h-1+k}'$ is a complete system of spheres for $M$. 

Observe that, since $b_1(M)=k$, $b_1(M')=0$. Then, after choosing orientations, the homology 
classes $[S_{h-1+i}']\in H_2(M;\Q)$ generate  $H_2(M;\Q)$ as a $\Q$--vector space, and {\it a fortiori} 
the same is true for the classes $[S_1'],\ldots, [S_{h-1+k}']$.

Now, according to the lemma on page 84 of~\cite{Ha68}, the system of spheres $S_1',\ldots, S_{h-1+k}'$ may be 
transformed by a finite sequence of isotopies and ``$\rho$--operations'' (see~\cite{Ha68} for the definition) 
into a collection of pairwise disjoint, incompressible 2--spheres $S_1,\ldots, S_t$, $t\geq h-1+k$, such 
that each $S_i$ intersects the Heegaard surface 
$\del H$ in a single circle $C_i = S_i\cap\del H$, and moreover  
the classes $[S_i]$ still generate $H_2(M;\Q)$. Since $\dim_\Q H_2(M;\Q)=k$, 
up to renaming the spheres we may assume that $[S_1],\ldots, [S_k]$ are generators 
of $H_2(M;\Q)$. This finishes the proof of the lemma.
\end{proof}


\begin{proof}[Proof of Theorem~\ref{t:main}]
Let $(B,\pi)$ be an open book decomposition 
of a closed, oriented, connected 3--manifold $M$ with page $\Si$ and monodromy $\varphi$. 
If $\varphi$ is trivial then it is easy to check that $M$ is homeomorphic to the connected sum
of $b_1(\Si)$ copies of $S^2\times S^1$. This proves one direction of the 
statement. For the other direction, suppose that $M$ factorizes as in~\eqref{e:factor}.  
In view of Proposition~\ref{p:intersection} or Inequality~\eqref{e:b1ineq} we have 
\[
b_1(\Si)\geq b_1(M) \geq k.
\]
If $b_1(\Si)=k$, the above inequality implies $b_1(M) = k$ and therefore if we set 
\[
M':= M_1\#\cdots\# M_h
\]
we have $b_1(M')=0$. 

Denote by $H_{b_1(\Si)}\subset M$ the handlebody of genus $b_1(\Si)$ consisting of a regular 
neighborhood of $\Si$ in $M$. Since $\Si$ is the fiber of a fibration, the closure of the complement 
$M\setminus H_{b_1(\Si)}$ is a handlebody as well, which we denote by $H'_{b_1(\Si)}$. It follows that 
$M$ admits the Heegaard splitting 
\begin{equation}\label{e:hdec}
M=H_{b_1(\Si)} \cup H'_{b_1(\Si)}.
\end{equation} 

By Lemma \ref{standard spheres} there are pairwise disjoint embedded spheres $S_1,
\ldots, S_k \subset M$ which generate $H_2(M;\Q)$ and such that each $S_i$ intersects the 
Heegaard surface $\partial  H_{b_1(\Si)}$ in a single circle $C_i$. 

Observe that each circle $C_i$ bounds the disk 
$D_i = S_i\cap H_{b_1(\Si)}$ inside $H_{b_1(\Si)}$ and the disk $S_i\cap H'_{b_1(\Si)}$ 
inside $H'_{b_1(\Si)}$. Since the map 
\[
H_2(M;\Q)\to H_1(\partial H_{b_1(\Si)};\Q)
\] 
appearing in the Mayer--Vietoris sequence associated with the decomposition~\eqref{e:hdec} 
is injective, after choosing orientations we see that the induced homology classes $[C_i]$ 
generate a half-dimensional subspace of $H_1(\partial H_{b_1(\Si)};\Q)$ which is Lagrangian
for the intersection form on $H_1(\partial H_{b_1(\Si)};\Q)$ because the $C_i$'s are pairwise disjoint. 

We now claim that the $D_i$'s are a minimal system of compressing disks for $H_{b_1(\Si)}$.
To see this we can argue by induction on $b_1(\Si)$. If $b_1(\Si)=0$ there is nothing to prove, so 
we may assume $b_1(\Si)>0$.  Let $N$ be an open regular neighborhood of $D_1$.
Since $[C_1]\neq 0$, $H_{b_1(\Si)}\setminus N$ is connected and therefore by e.g.~\cite[Proposition~5.18]{Jo06} 
it is a handlebody. Moreover, the remaining homology classes $[C_i]$, $i\geq 2$, 
generate a Lagrangian subspace in the first homology group of the boundary of $H_{b_1(\Si)}\setminus N$. 
By the inductive assumption the disks $D_i$, for $i\geq 2$, are a minimal system of compressing 
disks for $H_{b_1(\Si)}\setminus N$, which proves the claim.  


Recall that, by construction, the curves $C_i = \del D_i$ bound compressing 
disks in $H'_{b_1(\Si)}$. Arguing as for $H_{b_1(\Si)}$ shows that such disks constitute a minimal system
for $H'_{b_1(\Si)}$. Thus, surgering $M$ along the spheres $S_1,\ldots, S_k$  yields a 3--manifold having 
a genus--0 Heegaard splitting, i.e.~$S^3$. This implies that $M$ 
is a connected sum of $k$ copies of $S^2\times S^1$, and we are left to show that 
the monodromy $\varphi$ is trivial. 

Now we choose a system of arcs for $\Si$, i.e.~a collection of properly embedded, pairwise disjoint 
oriented arcs $a_1,\ldots, a_{b_1(\Si)}\subset\Si$ 
whose associated homology classes $[a_i]\in H_1(\Si,\del\Si;\Q)$ generate the 
$\Q$--vector space $H_1(\Si,\del\Si;\Q)$. Then, after fixing an identification  
$H_{b_1(\Si)} = \Si\times I$, the disks $a_i\times I\subset \Si\times I$ yield 
another minimal system of disks $\{D'_i\}_{i=1}^g$ for $H_{b_1(\Si)}$. Thus,   
according to Theorem~\ref{t:RS}, the system $\{D_i\}_{i=1}^g$ is slide equivalent to 
the system $\{D'_i\}_{i=1}^g$. But recall that, by construction, each curve $C_i = \del D_i$ 
bounds a compressing disk in $H'_{b_1(\Si)}$, and a moment's reflection shows that any disk slide among the $D_i$'s 
gives rise to a disk $D_i *_a D_j$ whose boundary also bounds a compressing disk in $H'_{b_1(\Si)}$. 
By induction we conclude that any minimal system of disks $\{\tilde D_i\}_{i=1}^g$ 
obtained from $\{D_i\}_{i=1}^g$ by a finite sequence of isotopies and disk slides still has the property that 
each curve $\del\tilde D_i$ bounds a compressing disk in $H'_{b_1(\Si)}$. 

In particular, this conclusion 
applies to the system $\{D'_i\}_{i=1}^g$, showing that each of the circles 
$\del D'_i$ bounds a compressing disk in $H'_{b_1(\Si)}$.  Since the splitting~\eqref{e:hdec} 
is induced by the open book decomposition $(B,\pi)$, 
we can choose an identification $H'_{b_1(\Si)} = \Si\times [0,1]$ such that each $\del D'_i$ is 
of the form 
\[
a_i\times\{0\}\bigcup \varphi(a_i)\times\{1\}, 
\]
where $\varphi$ is the monodromy 
of $(B,\pi)$. The fact that $\del D'_i$ bounds a disk in $H'_{b_1(\Si)}$ says that there is 
a family of arcs in $\Si\times I$ interpolating between $a_i\times\{0\}$ and $\varphi(a_i)\times\{1\}$. 
Mapping such family to $\Si$ via the projection $\Si\times I\to\Si$ shows that each $a_i$ is homotopic 
to $\varphi(a_i)$ (with fixed endpoints), and therefore by~\cite{Ep66} each $a_i$ is isotopic to $\varphi(a_i)$ 
via an isotopy which keeps the endpoints fixed. Since $\{a_i\}$ is a system of arcs for $\Si$, 
a standard argument based on the Alexander lemma~\cite[Lemma~2.1]{FM11} implies that 
$\varphi$ is isotopic to the identity of $\Si$ via diffeomorphisms which fix $\del\Si$ pointwise. 
This concludes the proof of Theorem~\ref{t:main}.
\end{proof}

\bibliographystyle{amsplain}
\bibliography{biblio}
\end{document}